\DeclareMathOperator{\argmax}{arg\ max}
\title{Multivariate Laplace's approximation with estimated error and application to limit theorems}
\author{Tomasz M. \L api\'nski \small{(email: 84tomek@gmail.com})\\ \\Gda\'nsk University of Technology,
\\{\small ul. Gabriela Narutowicza 11/12, 80-233 Gda\'nsk, Poland}}
\begin{document}

\newtheorem{remark}{Remark}
\newtheorem{definition}{Definition}
\newtheorem{proposition}{Proposition}
\newtheorem{theorem}{Theorem}
\newtheorem{lemma}{Lemma}
\newtheorem{proofoflemma}{Proof of Lemma}
\newtheorem{proofofproposition}{Proof of Proposition}

\maketitle

\begin{abstract}
	In this paper we obtain an approximation for the multivariate Laplace's integral with a large parameter and estimate error term for two cases, when the maximum of the exponent is in the interior of the domain and on the boundary. We are specifically interested in the situation when the function in the exponent depends on the large parameter. As an application we prove weak law of large numbers and central limit theorem. The second result gives different limiting distributions for two cases mentioned above. When the maximum of the exponent is in the interior of the domain it is Normal distribution and if it is on the boundary, it is Exponential in one direction of integration and Normal in other directions.
\end{abstract}

\begin{keywords}
	multivariate Laplace's method, error estimates, asymptotic approximation of integrals, limit theorems, law of large numbers, central limit theorem\\
\end{keywords}

\begin{msc}
	41A60, 41A63, 60F05
\end{msc}
\\
\\
\copyright 2019. This manuscript version is made available under the CC-BY-NC-ND 4.0 license http://creativecommons.org/licenses/by-nc-nd/4.0/.\\
\\
https://doi.org/10.1016/j.jat.2019.105305
\\

\maketitle


\section{Introduction}

	\qquad We start with a result for one dimensional Laplace's integral and the maximum of the exponent on the boundary. This univariate case is well studied and error estimates are known, see e.g. \cite{Laplace_method_univariate_coefficients_1}, \cite{Laplace_method_univariate_coefficients_2}, \cite{error_bounds_for_univariate_case_book}. However, we need a specific estimates for further development.\\
	\indent Our main concern is with finite dimensional integrals. Without the rate of convergence and explicit error bounds it is proved in \cite{Laplace_approximation_multivariate_boundary_case_bleistein}, \cite{Laplace_approximation_multivariate_boundary_case_breitung}, \cite{Laplace_approximation_multivariate_boundary_case_wong}.\\
	\indent Let us consider an open set $\Omega\subset\mathbb{R}^{m}$ and define the integral
	\begin{equation}
		\label{Laplace_integral_interior}
		I(N):=\int_{\Omega}g(x)e^{Nf(x,N)}dx,
	\end{equation}
	where functions $f$ and $g$ are sufficiently regular, $N\in\mathbb{Z}_{+}$ and $f(\cdot,N)$ has as a unique maximum in the interior of $\Omega$. 
	Without dependence of $f$ on $N$ a precise error estimate for the above integral is given in \cite{Laplace_method_approach_kolokoltsov}. Then in \cite{error_bounds_for_interior_case_majerski} the authors provide an explicit error bound with slightly different underlying assumptions and various rate of convergence. Here we developed a method of the approximation and remainder estimate of (\ref{Laplace_integral_interior}) in the spirit of \cite{Laplace_method_approach_kolokoltsov}.\\
	\indent Our main result is an approximation of the integral 
	\begin{equation}
		\label{Laplace_integral_boundary}
		I(N):=\int_{\Omega\cap \{x:x_{1}\geq 0\}}g(x)e^{Nf(x,N)}dx,
	\end{equation}
	with $\Omega$, $N$, $g$ as in (\ref{Laplace_integral_interior}) and $f$ also sufficiently regular but with unique non-critical maximum on the boundary $\{x:x_{1}=0\}$. Our result includes the rate of convergence and explicit remainder estimate.\\
	\indent For the application, we consider probability distribution function where the probability of an event is constructed by taking the Laplace's integral over the event set and normalizing with the integral over the domain. We also consider a random vector whose values are the points of the domain of the integral. The first limit theorem, law of large numbers, gives us its expectation in the limit of $N$. It is equal to the point of maximum of the function in the exponent of Laplace's integral. The second result, central limit theorem, gives the distributions of the fluctuations. They are different for two cases of maximum. When it is in the interior of the domain it is Normal, and when it is on the boundary it is Exponential in one direction and Normal in other directions. The law of large numbers in this context is well understood in the theory of Large Deviation see e.g. \cite{Probability_kallenberg}, so our main contribution are the precise rate of convergence under certain regularity assumptions on $f$ and $g$.\\
	\indent Another application of the Laplace's approximation in Probability Theory was developed in \cite{application_of_Laplace_to_random_chaos_hashrova}, where authors study the asymptotics of the distribution of Gaussian and Weibullian random chaoses.\\
	\indent The motivation for the development of the results in this paper is to provide a framework and methodology to prove analogous results for sums instead of integral in (\ref{Laplace_integral_interior}) and (\ref{Laplace_integral_boundary}). This is done in \cite{lapinski}. Then the results from \cite{lapinski} are used to prove general limit theorems in the context of Statistical Mechanics in \cite{lapinski_theorem}.\\
	\indent All the results are extendable to have a positive real large parameter $N$ instead of the integer. Integrals (\ref{Laplace_integral_interior}) and (\ref{Laplace_integral_boundary}) with $N=1/\hbar$, where $\hbar$ is the Planck constant, are often met in quantum physics.


\section{Laplace's approximation}
	\qquad We consider an open set $\Omega\subset\mathbb{R}^{m}$ and a closed ball $\Omega'\subset\Omega$ with the center at the origin, radius $\varepsilon$ and volume $|\Omega'|$. Then we introduce a function $f:\Omega\times\mathbb{Z}_{+} \to \mathbb{R}$ for which derivatives up to third order exists on $\Omega'$ and are uniformly bounded. Further, its Hessian matrix is nonsingular. For all $N\geq N_{0}$, some $N_{0}\in\mathbb{Z}_{+}$, function $f(\cdot,N)$ have a unique maximum at $x^{*}(N)\in\Omega'$ such that
	\begin{equation}
		\label{Laplace_integral_delta}
		\Delta:=\inf_{N\geq N_{0}, x\in\Omega\backslash\Omega'}\{f(x^{*}(N),N)-f(x,N)\}>0.
	\end{equation}
	We choose the origin of our coordinate system to be the point $x^{*}=\lim_{N\to\infty}x^{*}(N)$.\\
	We also consider a $C^{1}(\Omega')$ function $g:\Omega\to\mathbb{R}$ and define constants
	\begin{align}
		\label{Laplace_integral_constants_G_G1}
		&G:=\sup_{x\in\Omega'}\|g(x)\|<\infty,\quad G^{(1)}:=\sup_{x\in\Omega'}\|Dg(x)\|<\infty,\\
		\label{Laplace_integral_bound}
		&C>0,\ C\geq\int_{\Omega}|g(x)|e^{N_{0} f(x,N)}dx,\ \text{for all}\ N\geq N_{0}.
	\end{align}
	Let us assume integrals (\ref{Laplace_integral_interior}) and (\ref{Laplace_integral_boundary}) exists and are finite. Further,
	\begin{enumerate}[(a)]
		\item for the integral (\ref{Laplace_integral_interior}) we assume $f(\cdot,N)$ has a maximum in the interior of $\Omega'$ and we introduce a constants
			\begin{align}
				\label{Laplace_integral_interior_constant_F'2}
				&F'^{(2)}:=\inf_{x\in\Omega', N\geq N_{0}}\|D^{2}f(x,N)^{-\frac{1}{2}}\|^{-2}>0,\\
				\label{Laplace_integral_interior_constant_F'2det}
				&F'^{(2)}_{det}:=\inf_{x\in\Omega', N\geq N_{0}}\sqrt{|\det D^{2}f(x,N)|}>0,\\
				\label{Laplace_integral_interior_constant_F2}
				&F^{(2)}:=\sup_{x\in\Omega',N\geq N_{0}}\|D^{2}f(x,N)\|<\infty,\\
				\label{Laplace_integral_interior_constant_F3}
				&F^{(3)}:=\sup_{x\in\Omega',N\geq N_{0}}\|D^{3}f(x,N)\|<\infty,
			\end{align}
		\item for the integral (\ref{Laplace_integral_boundary}) we assume $f(\cdot,N)$ has maximum on the boundary, i.e. $x^{*}(N)\in\{x:x_{1}=0\}$ and also introduce a constants
			\begin{align}
				\label{Laplace_integral_boundary_constant_F'1}
				&F'^{(1)}:=\inf_{x\in\Omega', N\geq N_{0}}\bigg|\frac{\partial f(x,N)}{\partial x_{1}}\bigg|>0,\\
				\label{Laplace_integral_boundary_constant_F'2}
				&F'^{(2)}:=\inf_{x\in\Omega', N\geq N_{0}}\|D_{y}^{2}f(x,N)^{-\frac{1}{2}}\|^{-2}>0,\\
				\label{Laplace_integral_boundary_constant_F'2det}
				&F'^{(2)}_{det}:=\inf_{x\in\Omega', N\geq N_{0}}\sqrt{\det|D_{y}^{2}f(x,N)|}>0,\\
				\label{Laplace_integral_boundary_constant_F2}
				&F^{(2)}:=\sup_{x\in\Omega',N\geq N_{0}}\|D^{2}f(x,N)\|<\infty,\\
				\label{Laplace_integral_boundary_constant_F3}
				&F^{(3)}:=\sup_{x\in\Omega',N\geq N_{0}}\|D^{3}f(x,N)\|<\infty,
			\end{align}
			where $y=(x_{2},\ldots,x_{m})$ and $D_{y}$ is a differential operator in that coordinates.\\
			Furthermore, we assume $\argmax_{y\in\Omega'(x_{1})} f(x_{1},y,N)$ is a single point for any $x_{1}\in(0,\varepsilon)$, where $\Omega'(x_{1})=\{y:(x_{1},y)\in\Omega\}.$
	\end{enumerate}
	
	\begin{remark}
		The situation when the boundary of the domain of integration in (\ref{Laplace_integral_boundary}) is an arbitrary curve, smooth in the neighborhood of the origin, can be reduced to the case with the boundary $\{x:x_{1}=0\}$ by appropriate local change of the coordinates. 
	\end{remark}


	\subsection{Univariate integral}

	\begin{theorem}
		For the integral (\ref{Laplace_integral_boundary}) with $\Omega=[0,\infty)$ following approximation holds
		\begin{align*}
			\int_{\Omega}g(x)e^{N f(x,N)}dx=e^{N f(0,N)}\frac{1}{N}\bigg(\frac{g(0)}{|f'(0,N)|}+\frac{\omega(N)}{N}\bigg),
		\end{align*}
		where $N\geq N_{1}$ for $N_{1}=\max\{\lceil{1}/{\varepsilon^{2}}\rceil,N_{0}\}$, $\omega(N)=O(1)\ \text{as}\ N\to\infty$ and
		\begin{align*}
			|\omega(N)|\leq&\frac{GF^{(2)}}{\big(F'^{(1)}\big)^{3}}\exp\bigg(\frac{1}{2}F^{(2)}\bigg)+\frac{G^{(1)}}{\big(F'^{(1)}\big)^{2}}+\frac{2G}{F'^{(1)}}N\exp\big(-N^{1/2}F'^{(1)}\big)+\\
				&+N^{2}\exp\big(-N\Delta-N_{0}(f(0,N)-\Delta)\big)C.
		\end{align*}
	\end{theorem}

	\begin{proof}
		Let us define
		\begin{equation*}
			\label{Extended_Laplace_proof_laplace_approximation_one_dimension_interior_of_domain_integral_gaussian}
		           I_{B}(N):=g(0)e^{N f(0,N)}\frac{1}{N}\frac{1}{|f'(0,N)|}=g(0)e^{N f(0,N)}\int_{0}^{\infty}e^{-N|f'(0,N)|x}dx,
		\end{equation*}
		and introduce a set $U_{N}:=\{x:|x|\leq1/N^{1/2}\}$. For all $N\geq N_{1}$ with $N_{1}=\max\{\lceil1/\varepsilon^{2}\rceil,N_{0}\}$ we have that $U_{N}\subset \Omega'$. Then with use of Taylor's Theorem we decompose $I(N)$ and $I_{B}(N)$
		\begin{align*}
			\label{Extended_Laplace_proof_laplace_approximation_one_dimension_interior_of_domain_integral_gaussian_decomposition}
			I(N)=&I_{11}(N)+I_{12}(N)+I_{2}(N)+I_{3}(N):=g(0)\int_{U_{N}}e^{N f(x,N)}dx+\int_{U_{N}}g'(x_{\theta})xe^{N f(x,N)}dx+\\
			&+\int_{\Omega'\backslash U_{N}}g(x)e^{N f(x,N)}dx+\int_{\Omega\backslash\Omega'}g(x)e^{Nf(x,N)}dx,\\
			I_{B}(N)&=I_{B1}(N)+I_{B2}(N):=g(0)e^{N f(0,N)}\int_{U_{N}}e^{-N|f'(0,N)|x}dx+\\
			&+g(0)e^{N f(0,N)}\int_{\Omega\backslash U_{N}}e^{-N|f'(0,N)|x}dx.
		\end{align*} 
		Here and everywhere in the proofs $x_{\theta}$ denotes a point between $x$ and maximum, which might be different in different instances.\\ 
		Now, we put together above decompositions
		\begin{equation*}
			|I(N)-I_{B}(N)|\leq|I_{11}(N)-I_{B1}(N)|+|I_{12}(N)|+|I_{2}(N)|+|I_{3}(N)|+|I_{B2}(N)|,
		\end{equation*}
		and approximate each term.\\
		To approximate the expression $|I_{11}(N)-I_{B1}(N)|$ we apply Taylor's Theorem for $f$ and use inequality $|e^{t}-1|\leq|t|e^{|t|}$
		\begin{align*}
			|I_{11}&(N)-I_{B1}(N)|=\bigg|g(0)\int_{U_{N}}\exp\big(N f(0,N)-N|f'(0,N)|x\big)\bigg[\exp\bigg(\frac{N}{2}f''(x_{\theta},N)x^{2}\bigg)-1\bigg]dx\bigg|\leq\\
			\leq &|g(0)|\int_{U_{N}}\exp\big(N f(0,N)-N|f'(0,N)|x\big)\frac{N}{2}\big|f''(x_{\theta},N)\big|x^{2}\exp\bigg(\frac{N}{2}f''(x_{\theta},N)x^{2}\bigg)dx.
		\end{align*}
		Since the integration domain is $U_{N}$, we have $|x|\leq \frac{1}{N^{1/2}}$. Hence last expression is bounded by
		\begin{align*}
			\label{Extended_Laplace_proof_laplace_approximation_one_dimension_interior_of_domain_approximation_of_first_term_expression_with_constants}
			\leq\frac{1}{2}NGF^{(2)}\exp\bigg(N f(0,N)+\frac{1}{2}F^{(2)}\bigg)\int_{U_{N}}x^{2}e^{-N |f'(0,N)|x}dx,
		\end{align*}
		where $G$ and $F^{(2)}$ are given by (\ref{Laplace_integral_constants_G_G1}) and (\ref{Laplace_integral_boundary_constant_F2}).\\
		Further, for above integral we have
		\begin{align*}
			\int_{U_{N}}&x^{2}e^{-N |f'(0,N)|x}dx\leq\int_{0}^{\infty}x^{2}e^{-N |f'(0,N)|x}dx\leq\frac{2}{\big(N F'^{(1)}\big)^{3}},
		\end{align*}
		where $F'^{(1)}$ is given by (\ref{Laplace_integral_boundary_constant_F'1}).\\
		Hence we have
		\begin{equation*}
			\label{Extended_Laplace_proof_laplace_approximation_one_dimension_interior_of_domain_approximation_of_integral_I11_IG1}
			|I_{11}(N)-I_{B1}(N)|\leq \frac{1}{N^{2}}\frac{GF^{(2)}}{\big(F'^{(1)}\big)^{3}}\exp\bigg(N f(0,N)+\frac{1}{2}F^{(2)}\bigg).
		\end{equation*}	
		For the integral $|I_{12}(N)|$ we also use Taylor's Theorem, i.e. $f(x,N)=f(0,N)+f'(x_{\theta}(N),N)x$. Since $f(0,N)$ has a unique maximum and $U_{N}\subset\Omega'$ we get an estimate
		\begin{equation*}
			f(x,N)\leq f(0,N)-F'^{(1)}x.
		\end{equation*}
		We insert it into $|I_{12}(N)|$ and get an upper bound
		\begin{equation*}
			\label{Extended_Laplace_proof_laplace_approximation_one_dimension_interior_of_domain_approximation_of_integral_I12}
			|I_{12}(N)|\leq e^{Nf(0,N)}G^{(1)}\int_{0}^{\infty}x\exp\big(-NF'^{(1)}x\big)dx\leq G^{(1)}e^{N f(0,N)}\frac{1}{\big(NF'^{(1)}\big)^{2}},
		\end{equation*}
		where $G^{(1)}$ is given by (\ref{Laplace_integral_constants_G_G1}).\\
		For $|I_{2}(N)|$ we also use estimate of $f$ used for $I_{12}$. Then find an upper bound and integrate
		\begin{equation*}
			\label{Extended_Laplace_proof_laplace_approximation_one_dimension_interior_of_domain_approximation_of_integral_I2}
			|I_{2}(N)|\leq\int_{N^{-\frac{1}{2}}}^{\infty}|g(x)|\exp\big(N f(0,N)-N F'^{(1)}x\big)dx=Ge^{N f(0,N)}\frac{1}{NF'^{(1)}}\exp\big(-N^{1/2}F'^{(1)}\big).
		\end{equation*}
		In case of $|I_{3}(N)|$ we have following estimate
		\begin{align*}
			|I_{3}(N)|&\leq e^{Nf(0,N)}\int_{\Omega\backslash\Omega'}|g(x)|\exp\big(N_{0}(f(x,N)-f(0,N))-(N-N_{0})\Delta\big)dx\leq\\
			&\leq e^{(N-N_{0})(f(0,N)-\Delta)}\int_{\Omega\backslash\Omega'}|g(x)|e^{N_{0}f(x,N)}dx\leq Ce^{(N-N_{0})(f(0,N)-\Delta)},
		\end{align*}
		where  $\Delta$ is given by (\ref{Laplace_integral_delta}) and the last inequality is by (\ref{Laplace_integral_bound}).\\
		The integral $|I_{B2}(N)|$ we approximate similarly to $I_{2}$, hence
		\begin{equation*}
			\label{Extended_Laplace_proof_laplace_approximation_one_dimension_interior_of_domain_approximation_of_integral_IG2}
			|I_{B2}(N)|\leq e^{N f(0,N)}\frac{G}{NF'^{(1)}}\exp\big(-N^{1/2}F'^{(1)}\big).
		\end{equation*}
		Now, we combine above approximations and conclude with
		\begin{align*}
			&|I(N)-I_{B}(N)|\leq e^{N f(0,N)}\frac{1}{N^{2}}\Bigg[\frac{GF^{(2)}}{\big(F'^{(1)}\big)^{3}}\exp\bigg(\frac{1}{2}F^{(2)}\bigg)+\frac{G^{(1)}}{\big(F'^{(1)}\big)^{2}}+\frac{2G}{F'^{(1)}}N\exp\big(-N^{1/2}F'^{(1)}\big)+\\
			&+CN^{2}\exp\big(-N\Delta-N_{0}(f(x^{*},N)-\Delta)\big)\Bigg].
		\end{align*}
	\end{proof}


\subsection{Multivariate integral with maximum in the interior}

	\begin{theorem}
		For the integral (\ref{Laplace_integral_interior}) following approximation holds
		\begin{align*}
			\int_{\Omega}g(x)e^{N f(x,N)}dx=e^{N f(x^{*}(N),N)}\bigg(\frac{2\pi}{N}\bigg)^{\frac{m}{2}}\Bigg(\frac{g(x^{*}(N))}{\sqrt{|\det D^{2}f(x^{*}(N),N)|}}+\frac{\omega(N)}{\sqrt{N}}\Bigg),
		\end{align*}
		where $N\geq N_{1}$, for $N_{1}=\max\{\lceil 1/\varepsilon^3\rceil,N_{0}\}$, $\omega(N)=O(1)\ \text{as}\ N\to\infty$ and 
		\begin{align*}
			|\omega&(N)|\leq\frac{\sqrt{2}\Gamma(\frac{m+3}{2})}{\big(F'^{(2)}\big)^{\frac{m+1}{2}}\Gamma(\frac{m}{2})}\bigg[\frac{GF^{(3)}}{3F'^{(2)}}\exp\bigg(\frac{F^{(3)}}{6}\bigg)+\frac{2G^{(1)}}{m+1}\bigg]+\sqrt{N}\bigg(\frac{2\pi}{N}\bigg)^{-\frac{m}{2}}\times\\
			&\times\Bigg[G\exp\big(-N^{\frac{1}{3}}F'^{(2)}\big)\Bigg(|\Omega'|+\frac{\exp(F'^{(2)}/2)}{(2\pi)^{-\frac{m}{2}}F'^{(2)}_{det}}\Bigg)+C\exp\big(-N\Delta-N_{0}(f(x^{*}(N),N)-\Delta)\big)\Bigg].
		\end{align*}
	\end{theorem}

	\begin{proof}
		First we define
		\begin{align*}
			I_{G}(N):&=g(x^{*}(N))e^{N f(x^{*}(N),N)}\bigg(\frac{2\pi}{N}\bigg)^{\frac{m}{2}}\frac{1}{\sqrt{|\det D^{2}f(x^{*}(N),N)|}}=\\
			&=g(x^{*}(N))e^{N f(x^{*}(N),N)}\int_{\mathbb{R}^{m}}\exp\bigg(\frac{1}{2}N(x-x^{*}(N))^{T}D^{2}f(x^{*}(N),N)(x-x^{*}(N))\bigg)dx,
		\end{align*}
		 and introduce a set $U_{N}:=\{x:|x-x^{*}(N)|\leq \frac{1}{N^{1/3}}\}$. For $N\geq N_{1}$ where $N_{1}=\max\{\lceil 1/\varepsilon^{3}\rceil,N_{0}\}$ we have that $U_{N}\subset \Omega'$.\\ Then using Taylor's Theorem we decompose $I(N)$ and $I_{G}(N)$ 
		\begin{align*}
			I(N)&=I_{11}(N)+I_{12}(N)+I_{2}(N)+I_{3}(N):=g(x^{*}(N))\int_{U_{N}}e^{N f(x,N)}dx+\\
			+&\int_{U_{N}}Dg(x_{\theta}(N))^{T}(x-x^{*}(N))e^{N f(x,N)}dx+\int_{\Omega'\backslash U_{N}}g(x)e^{N f(x,N)}dx+\int_{\Omega\backslash\Omega'}g(x)e^{N f(x,N)}dx,\\
			\label{Extended_Laplace_proof_laplace_approximation_m_dimension_interior_of_domain_laplace_first_integral_first_decomposition}
			I_{G}(N&)=I_{G1}(N)+I_{G2}(N):=\\
			=&g(x^{*}(N))e^{N f(x^{*}(N),N)}\int_{U_{N}}\exp\bigg(\frac{1}{2}N(x-x^{*}(N))^{T}D^{2}f(x^{*}(N),N)(x-x^{*}(N))\bigg)dx+\notag\\
			+&g(x^{*}(N))e^{N f(x^{*}(N),N)}\int_{\mathbb{R}^{m}\backslash U_{N}}\exp\bigg(\frac{1}{2}N(x-x^{*}(N))^{T}D^{2}f(x^{*}(N),N)(x-x^{*}(N))\bigg)dx.
		\end{align*} 
		We combine above integrals into	
		\begin{equation*}
			|I(N)-I_{G}(N)|\leq|I_{11}(N)-I_{G1}(N)|+|I_{12}(N)|+|I_{2}(N)|+|I_{3}(N)|+|I_{G2}(N)|.
		\end{equation*}
		For the expression $|I_{11}(N)-I_{G1}(N)|$ we use third order Taylor's Theorem to obtain			
		\begin{align*}
			|I_{11}&(N)-I_{G1}(N)|=\\
			&=|g(x^{*}(N))|\int_{U_{N}}\exp\bigg(N f(x^{*}(N),N)+\frac{1}{2}N(x-x^{*}(N))^{T}D^{2}f(x^{*}(N),N)(x-x^{*}(N))\bigg)\times\\
			&\times\bigg[\exp\bigg(\frac{1}{6}N D^{3}f(x_{\theta}(N),N)(x-x^{*}(N))^{3}\bigg)-1\bigg]dx,
		\end{align*}
		due to $Df(x^{*}(N),N)^{T}(x-x^{*}(N))=0$, since $x^{*}(N)$ is a critical point.\\
		The the third term in the Taylor's Theorem can be bounded
		\begin{equation*}
			\label{Extended_Laplace_proof_taylors_theorem_term_upper_bound}
			|D^{3}f(x_{\theta})x^{m}|\leq\|D^{3}f(x_{\theta})\| |x|^{m}.
		\end{equation*}
		Next, using above inequality and $|e^{t}-1|\leq|t|e^{|t|}$ yields
		\begin{align*}
			|I_{11}&(N)-I_{G1}(N)|\leq\frac{1}{6}NGF^{(3)}\exp\bigg(N f(x^{*}(N),N)+\frac{1}{6}F^{(3)}\bigg)\times\\
			&\times\int_{U_{N}}|x-x^{*}(N)|^{3}\exp\bigg(\frac{1}{2}N(x-x^{*}(N))^{T}D^{2}f(x^{*}(N),N)(x-x^{*}(N))\bigg)dx,\notag
		\end{align*}
		since $|x-x^{*}(N)|\leq \frac{1}{N^{1/3}}$ for any $x\in U_{N}$, where $G$ and $F^{(3)}$ are defined by (\ref{Laplace_integral_constants_G_G1}) and (\ref{Laplace_integral_interior_constant_F3}).\\
		Then by calculus result  $\int_{\mathbb{R}^{m}}|x|^{3}e^{-a|x|^{2}}dx=a^{-\frac{m+3}{2}}\pi^{\frac{m}{2}}\Gamma(\frac{m+3}{2})/\Gamma(\frac{m}{2})$ and the fact that
		\begin{align*} 
			&(x-x^{*}(N))^{T}D^{2}f(x^{*}(N),N)(x-x^{*}(N))=-\big|(x-x^{*}(N))^{T}D^{2}f(x^{*}(N),N)(x-x^{*}(N))\big|\leq\\
			&\leq -|x-x^{*}(N)|^{2}\|D^{2}f(x^{*}(N),N)^{-1/2}\|^{-2},
		\end{align*}
		we obtain an estimate
		\begin{align*}
			&\int_{U_{N}}|x-x^{*}(N)|^{3}\exp\bigg(\frac{1}{2}N(x-x^{*}(N))^{T}D^{2}f(x^{*}(N),N)(x-x^{*}(N))\bigg)dx\leq\\
			&\leq \int_{\mathbb{R}^{m}}|x|^{3}\exp\bigg(-\frac{1}{2}NF'^{(2)}x^{2}\bigg)dx=\pi^{\frac{m}{2}}\frac{\Gamma(\frac{m+3}{2})}{\Gamma(\frac{m}{2})}\bigg(\frac{1}{2}NF'^{(2)}\bigg)^{-\frac{m+3}{2}},
		\end{align*}
		where $F'^{(2)}$ is given by (\ref{Laplace_integral_interior_constant_F'2}). Therefore
		\begin{equation*}
			\label{Extended_Laplace_proof_laplace_approximation_m_dimension_interior_of_domain_approximation_of_integral_I11_IG1}
			|I_{11}(N)-I_{G1}(N)|\leq\frac{1}{6}GF^{(3)}\exp\bigg(\frac{1}{6}F^{(3)}\bigg)\pi^{\frac{m}{2}}\frac{\Gamma(\frac{m+3}{2})}{\Gamma(\frac{m}{2})}\bigg(\frac{F'^{(2)}}{2}\bigg)^{-\frac{m+3}{2}}\frac{1}{\sqrt{N}}e^{Nf(x^{*}(N),N)}N^{-\frac{m}{2}}.
		\end{equation*} 
		Now let us consider integral $I_{12}(N)$. Here again we apply Taylor's Theorem to obtain an upper bound
		\begin{equation}
			\label{Laplace_integral_interior_proof_f_estimate}
			f(x,N)\leq f(x^{*}(N),N)-\frac{1}{2}F'^{(2)}(x-x^{*}(N))^{2},
		\end{equation} 
		and since $g$ has bounded derivative in $U_{N}$ we have
		\begin{equation*}
			|I_{12}(N)|\leq G^{(1)}e^{N f(x^{*}(N),N)}\int_{U_{N}}|x-x^{*}(N)|\exp\bigg(-\frac{1}{2}NF'^{(2)}(x-x^{*}(N))^{2}\bigg)dx.
		\end{equation*}
		Then, using $\int_{\mathbb{R}^{m}}|x|e^{-a|x|^{2}}dx=a^{-\frac{m+1}{2}}\pi^{\frac{m}{2}}\Gamma(\frac{m+1}{2})/\Gamma(\frac{m}{2})$ yields
		\begin{equation*}
			\label{Extended_Laplace_proof_laplace_approximation_m_dimension_interior_of_domain_approximation_of_integral_I2}
			|I_{12}(N)|\leq\pi^{\frac{m}{2}}\frac{\Gamma(\frac{m+1}{2})}{\Gamma(\frac{m}{2})}\bigg(\frac{F'^{(2)}}{2}\bigg)^{-\frac{m+1}{2}}G^{(1)}\frac{1}{\sqrt{N}}e^{N f(x^{*}(N),N)}N^{-\frac{m}{2}}.
		\end{equation*}	
		For $I_{2}$ we also use (\ref{Laplace_integral_interior_proof_f_estimate}) and obtain
		\begin{equation*}
			|I_{2}(N)|\leq e^{N f(x^{*}(N),N)}\int_{\Omega'\backslash U_{N}}g(x)\exp\bigg(-\frac{1}{2}NF'^{(2)}(x-x^{*}(N))^{2}\bigg)dx.
		\end{equation*}
		Since in the set $\Omega'\backslash U_{N}$ function $g$ is bounded by $G$ and $|x-x^{*}(N)|>\frac{1}{N^{1/3}}$, hence
		\begin{align*}
 			|I_{2}(N)|\leq e^{N f(x^{*}(N),N)}G|\Omega'|\exp\bigg(-\frac{1}{2}N^{1/3}F'^{(2)}\bigg).
		\end{align*}
		In case of $|I_{3}(N)|$ we have following upper bound
		\begin{align*}
			|I_{3}(N)|&\leq e^{Nf(x^{*}(N),N)}\int_{\Omega\backslash\Omega'}|g(x)|\exp\big(N_{0}(f(x,N)-f(x^{*},N))-(N-N_{0})\Delta\big)dx\leq\\
			&\leq \exp\big((N-N_{0})(f(x^{*},N)-\Delta)\big)\int_{\Omega\backslash\Omega'}|g(x)|e^{N_{0}f(x,N)}dx\leq\\
			&\leq e^{Nf(x^{*}(N),N)}C\exp\big(-N\Delta-N_{0}(f(x^{*},N)-\Delta)\big),
		\end{align*}
		where the last inequality is due to assumption (\ref{Laplace_integral_bound}).\\
		Integral $I_{G2}$ we estimate similarly to above yielding
		\begin{align*}
 			|I_{G2}(N)|\leq&|g(x^{*}(N))|e^{N f(x^{*}(N),N)}\exp\big(-\frac{1}{2}N^{1/3}F'^{2}+\frac{1}{2}F'^{2}\big)\times\\
			&\times\int_{\mathbb{R}^{m}}\exp\bigg(\frac{1}{2}(x-x^{*}(N))^{T}D^{2}f(x^{*}(N),N)(x-x^{*}(N))\bigg)dx\leq \\
			&\leq Ge^{N f(x^{*}(N),N)}\exp\bigg(-\frac{1}{2}N^{1/3}F'^{(2)}+\frac{1}{2}F'^{(2)}\bigg)\frac{(2\pi)^{\frac{m}{2}}}{F'^{(2)}_{det}}.
		\end{align*}
		where $F'^{(2)}_{det}$ is given by (\ref{Laplace_integral_interior_constant_F'2det}).\\
		Then we combine above approximations 
		\begin{align*}
			\label{Laplaca_approximation_m-dimensional_interior_proof_combined_estimate}
			|I(&N)-I_{G}(N)|\leq\frac{1}{\sqrt{N}}e^{N f(x^{*}(N),N)}\bigg(\frac{2\pi}{N}\bigg)^{\frac{m}{2}}\Bigg[\frac{\sqrt{2}\Gamma(\frac{m+3}{2})}{\Gamma(\frac{m}{2})}\big(F'^{(2)}\big)^{-\frac{m+1}{2}}\times\notag\\
			&\times\bigg(\frac{GF^{(3)}}{3F'^{(2)}}\exp\bigg(\frac{1}{6}F^{(3)}\bigg)+\frac{2G^{(1)}}{m+1}\bigg)+\Bigg((2\pi)^{-\frac{m}{2}}|\Omega'|+\frac{\exp\big(F'^{(2)}/2\big)}{F'^{(2)}_{det}}\Bigg)\times\\
			&\times N^{\frac{m+1}{2}}G\exp\bigg(-\frac{1}{2}N^{1/3}F'^{(2)}\bigg)+\sqrt{N}\bigg(\frac{2\pi}{N}\bigg)^{-\frac{m}{2}}C\exp\big(-N\Delta-N_{0}(f(x^{*},N)-\Delta)\big)\Bigg].
		\end{align*}
	\end{proof}


\subsection{Multivariate integral with maximum on the boundary}
	\begin{theorem}
		For the integral (\ref{Laplace_integral_boundary}) following approximation holds
		\begin{equation*}
			I(N)=e^{N f(x^{*}(N),N)}\frac{1}{N}\bigg(\frac{2\pi}{N}\bigg)^{\frac{m-1}{2}}\Bigg(\frac{g(x^{*}(N))}{\Big|\frac{\partial f(x^{*}(N),N)}{\partial x_{1}}\Big|\sqrt{\big|\det D_{y}^{2}f(x^{*}(N),N)\big|}}+\frac{\omega(N)}{F'^{(1)}\sqrt{N}}\Bigg),
		\end{equation*}
		where $N\geq N_{1}$ for $N_{1}=\max\{\lceil 1/\varepsilon^{3}\rceil,\lceil 1/\varepsilon^{2}\rceil,N_{0}\}$, $\omega(N)=O(1)\ \text{as}\ N\to\infty$ and
		\begin{align*}
			|\omega&(N)|\leq\frac{\sqrt{2}\Gamma(\frac{m+3}{2})}{\big(F'^{(2)}\big)^{\frac{m+1}{2}}\Gamma(\frac{m}{2})}\Bigg(\frac{GF^{(3)}}{3F'^{(2)}}\exp\bigg(\frac{F^{(3)}}{6}\bigg)+\frac{2G^{(1)}}{m+1}\Bigg)+\frac{1}{\sqrt{N}F'^{(2)}_{det}}\bigg(\frac{G^{(1)}}{F'^{(1)}}+\frac{m^{2}GF^{(3)}}{2F'^{(1)}F'^{(2)}}\bigg)+\\
			&+\frac{1}{\sqrt{N}}\bigg(\frac{G}{F'^{(2)}_{det}}+\frac{1}{\sqrt{N}}\bigg)\Bigg[\frac{F^{(2)}}{\big(F'^{(1)}\big)^{2}}\exp\bigg(\frac{F^{(2)}}{2}\bigg)+2\exp\big(-N^{\frac{1}{2}}F'^{(1)}\big)\Bigg]+\sqrt{N}\bigg(\frac{2\pi}{N}\bigg)^{-\frac{m}{2}}\times\\
			&\times\Bigg[G\exp\big(-N^{\frac{1}{3}}F'^{(2)}\big)\bigg(|\Omega'|+\frac{\exp\big(F^{(2)}/2)}{(2\pi)^{-\frac{m}{2}}F'^{(2)}_{det}}\bigg)+C\exp\big(-N\Delta-N_{0}(f(x^{*},N)-\Delta)\big)\Bigg].
		\end{align*}
	\end{theorem}

	\begin{proof} 
		We decompose $I(N)$ into
		\begin{equation*}
			I(N)=I_{1}(N)+I_{2}(N):=\int_{\Omega'\cap\{x:x_{1}\geq 0\}}g(x)e^{N f(x,N)}dx+\int_{\Omega\cap\{x:x_{1}\geq 0\}\backslash\Omega'}g(x)e^{N f(x,N)}dx.\\
		\end{equation*}
		We approximate $I_{2}(N)$ as the integral $I_{3}$ in the previous proof. Since $\Omega\backslash\Omega'\subset\Omega\backslash U_{N}$
		\begin{align*}
			|I_{2}(N)|&\leq\exp\big((N-N_{0})(f(x^{*}(N),N)-\Delta)\big)\int_{\Omega\cap\{x:x_{1}\geq 0\}\backslash\Omega'}|g(x)|e^{N_{0}f(x,N)}dx\leq\\
			&\leq C\exp\big((N-N_{0})(f(x^{*}(N),N)-\Delta)\big).
		\end{align*}
		Then we express integral $I_{1}(N)$ as
		\begin{equation*}
			I_{1}(N)=\int_{0}^{\varepsilon}I_{1}(x_{1},N)dx_{1},
		\end{equation*}
		and
		\begin{equation*}
			I_{1}(x_{1},N):=\int_{\Omega'(x_{1})}g(x_{1},y)e^{Nf(x_{1},y,N)}dy,
		\end{equation*}
		where $\Omega'(x_{1})=\{y:(x_{1},y)\in\Omega'\}$.\\
		Next, we apply Theorem 2
		\begin{align*}
			I_{1}(x_{1},N)=e^{N f(x_{1},y^{*}(x_{1},N),N)}\bigg(\frac{2\pi}{N}\bigg)^{\frac{m-1}{2}}\Bigg(\frac{g(x_{1},y^{*}(x_{1},N))}{\sqrt{|\det D_{y}^{2}f(x_{1},y^{*}(x_{1},N),N)|}}+\frac{\omega_{I}(x_{1},N)}{\sqrt{N}}\Bigg),
		\end{align*}
		where $y^{*}(x_{1},N)=\argmax_{y\in\Omega'(x_{1})}f(x_{1},y,N)$.\\
		Due to integration over the set $\Omega'(x_{1})$ the constants which occurs as a result of application of Theorem 2 can be replaced by the appropriate constants for a larger set $\Omega'$ which are independent of $x_{1}$, that is (\ref{Laplace_integral_boundary_constant_F'2}), (\ref{Laplace_integral_boundary_constant_F'2det}),(\ref{Laplace_integral_boundary_constant_F2}) and (\ref{Laplace_integral_boundary_constant_F3}).\\
		Then we apply Theorem 1 to $I_{1}(N)$
		\begin{align*}
			I_{1}(N)=&e^{N f(x^{*}(N),N)}\frac{1}{N}\bigg(\frac{2\pi}{N}\bigg)^{\frac{m-1}{2}}\Bigg(\frac{g(x^{*}(N))}{\big|\frac{\partial f(x^{*}(N),N)}{\partial x_{1}}\big|\sqrt{|\det D_{y}^{2}f(x^{*}(N),N)|}}+\\
			&+\frac{\omega_{B1}(N)}{N\sqrt{\inf_{x_{1}\in(0,\varepsilon)}|\det D_{y}^{2}f(x_{1},y^{*}(x_{1},N),N)|}}+\frac{\omega_{I}(N)}{\big|\frac{\partial f(x^{*}(N),N)}{\partial x_{1}}\big|\sqrt{N}}+\frac{\omega_{B2}(N)}{N^{\frac{2}{3}}}\Bigg),
		\end{align*}
		where  $(0,y^{*}(0,N))=x^{*}(N)$.\\
		Since Theorem 1 was applied where the integration domain was the curve $y^{*}(x_{1},N)$, the constants in the estimate of $\omega_{B1}(N)$ and $\omega_{B2}(N)$ can be replaced by the constants for larger set $\Omega'$ i.e. (\ref{Laplace_integral_boundary_constant_F'1}), (\ref{Laplace_integral_boundary_constant_F'2}) and (\ref{Laplace_integral_boundary_constant_F2}). 
		\begin{align*}
			|\omega_{B1}(N)|\leq &\frac{GF^{(2)}}{\big(F'^{(1)}\big)^{3}}\exp\bigg(\frac{1}{2}F^{(2)}\bigg)+\frac{1}{\big(F'^{(1)}\big)^{2}}\bigg(G^{(1)}+\frac{m^{2}GF^{(3)}}{2F'^{(2)}}\bigg)+\\
			&+\frac{2G}{F'^{(1)}}N\exp\big(-N^{1/2}F'^{(1)}\big),\\
			|\omega_{B2}(N)|\leq &\frac{F^{(2)}}{\big(F'^{(1)}\big)^{3}}\exp\bigg(\frac{1}{2}F^{(2)}\bigg)+\frac{2}{F'^{(1)}}N\exp\big(-N^{1/2}F'^{(1)}\big).
		\end{align*}
		Then we combine above results with the estimate of $I_{2}(N)$ to obtain the final result.
	\end{proof}


\section{Limit theorems}

	For the integral (\ref{Laplace_integral_interior}) and (\ref{Laplace_integral_boundary}) let us assume
	\begin{equation}
		\label{probability_equation_for_f}
		f(x,N)=f(x)+\epsilon(N)\sigma(x),
	\end{equation}
	where $\sigma, f$ are some functions with derivatives up to second order at $x^{*}$, Hessian $D^{2}f(x^{*})$ is nonsingular and $\epsilon(N)>0, \epsilon(N)\to 0$ as $N\to\infty$.\\
	Let us consider a first order Taylor expansion of derivatives vector $Df(x^{*}(N),N)$ at $x^{*}$
	\begin{equation}
		\label{probability_estimate_maximums_derivation}
		Df(x^{*}(N),N) = Df(x^{*},N) + D^{2}f(x_{\theta}(N),N)(x^{*}(N)-x^{*}).
	\end{equation}
	For the integral (\ref{Laplace_integral_interior}), $x^{*}(N)$ and $x^{*}$ are a critical points. Hence $Df(x^{*}(N),N) = Df(x^{*})=0$ and due to (\ref{probability_equation_for_f}), $Df(x^{*},N) = \epsilon(N)D\sigma(x^{*})$. As a result we obtain 
	\begin{equation}
		\label{probability_estimate_maximums}
		x^{*}(N)=x^{*}+\epsilon(N)O(1),\ N\to\infty.
	\end{equation} 
	Equation (\ref{probability_estimate_maximums}) is also valid for the integral (\ref{Laplace_integral_boundary}), as for this situation $x_{1}^{*}(N)=x_{1}^{*}$.\\
	For every $N\geq N_{0}$, let $X(N)$ be a random vector with distributions defined using integral (\ref{Laplace_integral_interior}) and (\ref{Laplace_integral_boundary})
	\begin{align}
		\label{probability_distribution_interior}
		(a)&\ P_{N}(X(N)\in A)=\frac{\int_{A}e^{Nf(x,N)}dx}{\int_{\Omega}e^{Nf(x,N)}dx},\\
		\label{probability_distribution_boundary}
		(b)&\ P_{N}(X(N)\in B)=\frac{\int_{B}e^{Nf(x,N)}dx}{\int_{\Omega\cap\{x:x_{1}\geq 0\}}e^{Nf(x,N)}dx},
	\end{align}
	where $A\in\mathcal{B}(\Omega)$ and $B\in\mathcal{B}(\Omega\cap\{x:x_{1}\geq 0\})$.


\subsection{Weak law of large numbers}

	\begin{theorem}[Weak law of large numbers]
		As $N\to \infty$ the random vector $X(N)$ converges in distribution to a constant $x^{*}$ and following estimate of the mgf holds
		\begin{equation*}
			M_{X(N)}(\xi)=e^{\xi^{T}x^{*}}\bigg(1+\frac{O(1)}{\sqrt{N}}+O(1)\epsilon(N)\bigg),\ N\to\infty.
		\end{equation*}
	\end{theorem}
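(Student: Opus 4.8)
The plan is to write the moment generating function as a ratio of two Laplace integrals of the form (\ref{Laplace_integral}) and to apply the approximation theorems proved above to the numerator and the denominator separately. Using the definition of $P_{N}$ and that $X(N)$ is the identity map, we have
\begin{equation*}
M_{X(N)}(\xi)=E\big[e^{\xi^{T}X(N)}\big]=\frac{\int_{\Omega}e^{\xi^{T}x}e^{Nf(x,N)}dx}{\int_{\Omega}e^{Nf(x,N)}dx}.
\end{equation*}
The denominator is (\ref{Laplace_integral}) with $g\equiv1$, and the numerator is (\ref{Laplace_integral}) with $g(x)=e^{\xi^{T}x}$. For fixed $\xi$, since $\Omega$ is bounded, both amplitudes are bounded with bounded derivative on $\Omega'$, so the constants $G$ and $G^{(1)}$ stay finite and the remainder bounds of Theorem 2 (maximum of type (a)) or Theorem 3 (type (b)) apply to each integral.

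Applying the relevant approximation to both integrals, the common prefactors $e^{Nf(x^{*}(N),N)}$ together with the powers of $2\pi/N$ cancel in the ratio. Writing $D:=\sqrt{\det D^{2}f(x^{*}(N),N)}$ in the interior case and $D:=|f'(x^{*}(N),N)|\sqrt{|\det D^{2}f(x^{*}(N),N)|}$ in the boundary case, which is bounded away from $0$ and $\infty$ by (\ref{Laplaca_integral_bounded_determinant_of_f}) and (\ref{Laplace_integral_constant_F'1}), multiplying numerator and denominator by $D$ gives
\begin{equation*}
M_{X(N)}(\xi)=\frac{e^{\xi^{T}x^{*}(N)}+D\,\omega_{\xi}(N)/\sqrt{N}}{1+D\,\omega(N)/\sqrt{N}},
\end{equation*}
where $\omega_{\xi}(N),\omega(N)$ are the bounded remainders produced by the theorem for the numerator and denominator respectively. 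Since $D$ and these remainders are uniformly bounded, both correction terms are of order $O(1)/\sqrt{N}$.

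Next I would invoke Lemma 1 to write $x^{*}(N)=x^{*}+\epsilon(N)O(1)$, so that $e^{\xi^{T}x^{*}(N)}=e^{\xi^{T}x^{*}}(1+\epsilon(N)O(1))$ because $\epsilon(N)\to0$. Factoring $e^{\xi^{T}x^{*}}$ out of the numerator and expanding the denominator through $1/(1+u)=1+O(u)$ for the small quantity $u=D\omega(N)/\sqrt{N}$, the cross terms of orders $\epsilon(N)/\sqrt{N}$ and $1/N$ are absorbed into the leading error orders, yielding
\begin{equation*}
M_{X(N)}(\xi)=e^{\xi^{T}x^{*}}\bigg(1+\frac{O(1)}{\sqrt{N}}+O(1)\epsilon(N)\bigg),
\end{equation*}
which is the claimed estimate. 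Letting $N\to\infty$, the error terms vanish and $M_{X(N)}(\xi)\to e^{\xi^{T}x^{*}}$, the moment generating function of the point mass at $x^{*}$; since $\Omega$ is bounded this limit is finite in a neighbourhood of the origin, so by the continuity theorem for moment generating functions $X(N)$ converges in distribution to the constant vector $x^{*}$.

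The main obstacle is organisational rather than conceptual: one must verify that the amplitude $g(x)=e^{\xi^{T}x}$ and its gradient are genuinely controlled by the domain constants so that the \emph{same} remainder bounds of Theorem 2 or 3 apply, and then carefully track how contributions of orders $1/\sqrt{N}$, $\epsilon(N)$ and $\epsilon(N)/\sqrt{N}$ combine after division so that nothing of larger order survives. Particular care is needed because the true maximiser entering the Laplace formula is $x^{*}(N)$, not $x^{*}$, so the $\epsilon(N)O(1)$ shift supplied by Lemma 1 is precisely what generates the $O(1)\epsilon(N)$ term in the final expansion.
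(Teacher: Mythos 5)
Your proposal is correct and follows essentially the same route as the paper: express the mgf as a ratio of two Laplace integrals, apply Theorem 2 (or 3) with amplitudes $g\equiv 1$ and $g(x)=e^{\xi^{T}x}$, cancel the common prefactors, and then use Lemma 1 to replace $x^{*}(N)$ by $x^{*}+\epsilon(N)O(1)$, which produces the $O(1)\epsilon(N)$ term. The extra care you take in checking that $G$, $G^{(1)}$ and the determinant bounds control the remainders uniformly, and the explicit appeal to the continuity theorem for mgfs, are details the paper leaves implicit but do not change the argument.
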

	
	\begin{remark}
		For this and two following limit theorems estimate of the convergence error term can be explicitly estimated with use of the previous results.
	\end{remark}
	
	\begin{proof}
		To prove the convergence of $X(N)$ it is sufficient to prove convergence of its moment generating functions
		\begin{align*}
			(a)&\ M_{X(N)}(\xi)=\frac{\int_{\Omega}e^{\xi^{T}x}e^{Nf(x,N)}dx}{\int_{\Omega}e^{Nf(x,N)}dx},\\
			(b)&\ M_{X(N)}(\xi)=\frac{\int_{\Omega\cap\{x:x_{1}\geq 0\}}e^{\xi^{T}x}e^{Nf(x,N)}dx}{\int_{\Omega\cap\{x:x_{1}\geq 0\}}e^{Nf(x,N)}dx},
		\end{align*}
		where $|\xi|<h$, for some $h>0$.\\
		We approximate the denominator with use of Theorem 2 
		\begin{align*}
			&(a)\int_{\Omega}e^{Nf(x,N)}dx=e^{N f(x^{*}(N),N)}\bigg(\frac{2\pi}{N}\bigg)^{\frac{m}{2}}\frac{1}{\sqrt{|\det D^{2}f(x^{*}(N),N)|}}\bigg(1+\frac{O(1)}{\sqrt{N}}\bigg),\\
			&(b)\int_{\Omega\cap\{x:x_{1}\geq 0\}}e^{Nf(x,N)}dx=e^{N f(x^{*}(N),N)}\frac{1}{N}\bigg(\frac{2\pi}{N}\bigg)^{\frac{m-1}{2}}\frac{1}{\Big|\frac{\partial f(x^{*}(N),N)}{\partial x_{1}}\Big|\sqrt{|\det D_{y}^{2}f(x^{*}(N),N)|}}\times\\
			&\qquad\times\bigg(1+\frac{O(1)}{\sqrt{N}}\bigg),
		\end{align*}	
		and numerator
		\begin{align*}
			(a)\int_{\Omega}\exp\big(\xi^{T}&x+Nf(x,N)\big)dx=\exp\big(\xi^{T}x^{*}(N)+N f(x^{*}(N),N)\big)\bigg(\frac{2\pi}{N}\bigg)^{\frac{m}{2}}\times\\
			&\times\frac{1}{\sqrt{|\det D^{2}f(x^{*}(N),N)|}}\bigg(1+\frac{O(1)}{\sqrt{N}}\bigg),\\
			(b)\int_{\Omega\cap\{x:x_{1}\geq 0\}}&\exp\big(\xi^{T}x+Nf(x,N)\big)dx=\exp\big(\xi^{T}x^{*}(N)+N f(x^{*}(N),N)\big)\times\\
			&\times\frac{1}{N}\bigg(\frac{2\pi}{N}\bigg)^{\frac{m-1}{2}}\frac{1}{\big|\frac{\partial f(x^{*}(N),N)}{\partial x_{1}}\big|\sqrt{|\det D_{y}^{2}f(x^{*}(N),N)|}}\bigg(1+\frac{O(1)}{\sqrt{N}}\bigg).
		\end{align*}
		Dividing the approximations of denominators and numerators yields
		\begin{equation*}
			\label{limit_proof_limit_mgf}
			 M_{X(N)}(\xi)=\exp\big(\xi^{T}x^{*}(N)\big)\bigg(1+\frac{O(1)}{\sqrt{N}}\bigg).
		\end{equation*}
 		Next, we use estimate (\ref{probability_estimate_maximums})  and Taylor's expansion for the exponent function to obtain
		\begin{equation*}
			 M_{X(N)}(\xi)=\exp\big(\xi^{T}x^{*}\big)\big(1+\epsilon(N)\xi^{T}O(1)\big)\bigg(1+\frac{O(1)}{\sqrt{N}}\bigg),\notag
		\end{equation*}
		which yields the result of the theorem.
	\end{proof}


\subsection{Central limit theorem}

	Here, let us consider $X(N)$ with bounded range set $\Omega$ and for (\ref{probability_equation_for_f}) let $\epsilon(N)=o\big(\frac{1}{\sqrt{N}}\big)$. Then we have following preliminary result
	\begin{proposition}
		For the function $\widetilde{f}(x,N):=f(x,N)+\frac{1}{\sqrt{N}}\xi^{T}(x^{*}-x)$ with $\xi>0$ following approximations hold
		\begin{align}
			\label{probability_tilda_estimates_maximums}
			&\widetilde{x}^{*}(N)-x^{*}(N)=D^{2}f(x^{*})^{-1}\frac{\xi}{\sqrt{N}}+\frac{O(1)}{N},\\
			\label{probability_tilda_estimates_functions}
			&\widetilde{f}(\widetilde{x}^{*}(N),N)-f(x^{*}(N),N)=\frac{1}{2N}\xi^{T}D^{2}f(x^{*})^{-1}\xi+\frac{O(1)}{N^{3/2}}+\frac{O(1)\epsilon(N)}{\sqrt{N}},\\
			\label{probability_tilda_estimates_determinants}
			&\frac{\sqrt{|\det D^{2}f(x^{*}(N),N)|}}{\sqrt{|\det D^{2}\widetilde{f}(\widetilde{x}^{*}(N),N)|}}=1+\frac{O(1)}{\sqrt{N}},
		\end{align}
		as $N\to\infty$, where $\widetilde{x}^{*}(N)$ is a maximum of $\widetilde{f}$.
	\end{proposition}
	\begin{proof}
		We begin with the proof of (\ref{probability_tilda_estimates_maximums}).\\
		Note that for large enough $N$ the properties of the function $\widetilde{f}$ are the same as of $f$ but with the maximum at $\widetilde{x}^{*}(N)$.\\
		We use first order Taylor's expansion for $D\widetilde{f}(\widetilde{x}^{*}(N),N)$ at $x^{*}(N)$. Since $\widetilde{x}^{*}(N)$ and $x^{*}(N)$ are critical points we have
		\begin{equation*}
			D^{2}f(x_{\theta}(N),N)(\widetilde{x}^{*}(N)-x^{*}(N))=\frac{1}{\sqrt{N}}\xi.
		\end{equation*}
		Since for large enough $N$, $x_{\theta}(N)\in\Omega'$
		\begin{equation*}
			\big|\widetilde{x}^{*}(N)-x^{*}(N)\big|\leq\frac{|\xi|}{F'^{(2)}\sqrt{N}}.
		\end{equation*}
		Further, we use second order Taylor's expansion for $D\widetilde{f}(\widetilde{x}^{*}(N),N)$ at $x^{*}(N)$ and get an upper bound
		\begin{equation*}
			\bigg|D^{2}f(x^{*}(N),N)(\widetilde{x}^{*}(N)-x^{*}(N))-\frac{1}{\sqrt{N}}\xi\bigg|\leq \frac{F^{(3)}}{2}\big|\widetilde{x}^{*}(N)-x^{*}(N)\big|^{2}.
		\end{equation*}
		Substituting previous estimate yields
		\begin{equation*}
			\label{sum_tilda_estimates_proof_maximums_estimate_inequality}
			\bigg|(\widetilde{x}^{*}(N)-x^{*}(N))-D^{2}f(x^{*}(N),N)^{-1}\frac{1}{\sqrt{N}}\xi\bigg|\leq\frac{F^{(3)}|\xi|^{2}}{(F'^{(2)})^{3}2N}.
		\end{equation*}
		Next we use Taylor's Theorem for $D^{2}f(x^{*}(N),N)$ at $x^{*}$ to obtain
		\begin{equation}
			\label{probability_tilda_estimates_proof_f_second_derivative_estimate}
			D^{2}f(x^{*}(N),N)=D^{2}f(x^{*},N)+ \epsilon(N)O(1)=D^{2}f(x^{*})+ \epsilon(N)O(1),
		\end{equation}
		where the second equality is by (\ref{probability_equation_for_f}).\\
		Due to equation $(I+\epsilon A)^{-1}=I+O(\epsilon), \ \epsilon\to 0$ we get
		\begin{equation*}
			D^{2}f(x^{*},N)^{-1}=D^{2}f(x^{*})^{-1}\bigg[I+\epsilon(N)O(1)D^{2}f(x^{*})^{-1}\bigg]^{-1}=D^{2}f(x^{*})^{-1}+\epsilon(N)O(1).
		\end{equation*}
		Putting together above estimates yields
		\begin{equation*}
			\bigg|(\widetilde{x}^{*}(N)-x^{*}(N))-\frac{1}{\sqrt{N}}D^{2}f(x^{*})^{-1}\xi\bigg|\leq\frac{F^{(3)}|\xi|^{2}}{2(F'^{(2)})^{3}N}+\frac{\epsilon(N)O(1)|\xi|}{\sqrt{N}},
		\end{equation*}
		and as $\epsilon(N)=o\big(\frac{1}{\sqrt{N}}\big)$ consequently
		\begin{equation*}
			\bigg|(\widetilde{x}^{*}(N)-x^{*}(N))-\frac{1}{\sqrt{N}}D^{2}f(x^{*})^{-1}\xi\bigg|\leq\frac{K_{1}}{N},
		\end{equation*}
		for some positive $K_{1}$. Hence we get the first result of the Proposition.\\
		We start the proof of (\ref{probability_tilda_estimates_functions}) with expressing $\widetilde{f}(\widetilde{x}^{*}(N),N)$ in the following form
		\begin{align*}
			\widetilde{f}(\widetilde{x}^{*}(N),N)&=f\bigg(x^{*}(N)+D^{2}f(x^{*})^{-1}\frac{\xi}{\sqrt{N}}+\frac{O(1)}{N},N\bigg)-\\
			&-\frac{1}{\sqrt{N}}\xi^{T}\bigg(x^{*}(N)+D^{2}f(x^{*})^{-1}\frac{\xi}{\sqrt{N}}+\frac{O(1)}{N}-x^{*}\bigg),
		\end{align*}
		and expand it using 3-rd order Taylor's Theorem
		\begin{align*}
			&\widetilde{f}(\widetilde{x}^{*}(N),N)=f(x^{*}(N),N)-\frac{1}{\sqrt{N}}\xi^{T}\bigg(x^{*}(N)+D^{2}f(x^{*})^{-1}\frac{\xi}{\sqrt{N}}+\frac{O(1)}{N}-x^{*}\bigg)+\\
			&+\frac{1}{2}\bigg(D^{2}f(x^{*})^{-1}\frac{\xi}{\sqrt{N}}+\frac{O(1)}{N}\bigg)^{T}D^{2}f(x^{*}(N),N)\bigg(D^{2}(x^{*})^{-1}\frac{\xi}{\sqrt{N}}+\frac{O(1)}{N}\bigg)+\\
			&+\frac{1}{6}D^{3}f(x_{\theta}(N),N)\bigg(D^{2}f(x^{*})^{-1}\frac{\xi}{\sqrt{N}}+\frac{O(1)}{N}\bigg)^{3},
		\end{align*}
		then calculate its upper bound with use of (\ref{probability_tilda_estimates_proof_f_second_derivative_estimate})
		\begin{align*}
			\label{sum_tilda_estimates_proof_functions_third_derivative_bounded}
			&\bigg|\widetilde{f}(\widetilde{x}^{*}(N),N)-f(x^{*}(N),N)+\frac{1}{\sqrt{N}}\xi^{T}\bigg(D^{2}f(x^{*})^{-1}\frac{\xi}{\sqrt{N}}+\frac{O(1)}{N}+\epsilon(N)O(1)\bigg)-\\
			&-\frac{1}{2}\bigg(D^{2}f(x^{*})^{-1}\frac{\xi}{\sqrt{N}}+\frac{O(1)}{N}\bigg)^{T}\big(D^{2}f(x^{*})+\epsilon(N)O(1)\big)\bigg(D^{2}f(x^{*})^{-1}\frac{\xi}{\sqrt{N}}+\frac{O(1)}{N}\bigg)\bigg|\leq\\
			&\leq\frac{1}{6}F^{(3)}\bigg|D^{2}f(x^{*})^{-1}\frac{\xi}{\sqrt{N}}+\frac{O(1)}{N}\bigg|^{3}.\notag
		\end{align*}
		Next we apply Triangle and Schwarz inequalities and conclude with
		\begin{align*}
			&\bigg|\widetilde{f}(\widetilde{x}^{*}(N),N)-f(x^{*}(N),N)+\frac{1}{2N}\xi^{T}D^{2}f(x^{*})^{-1}\xi\bigg|\leq\frac{\epsilon(N)O(1)|\xi|}{\sqrt{N}}+\\
			&+\frac{O(1)}{N^{3/2}}\Bigg[\frac{1}{6}F^{(3)}\bigg|D^{2}f(x^{*})^{-1}\xi+\frac{O(1)}{\sqrt{N}}\bigg|^{3}+\frac{1}{2}\epsilon(N)\sqrt{N}\bigg|D^{2}f(x^{*})^{-1}\xi+\frac{O(1)}{\sqrt{N}}\bigg|^{2}+\\
			&+\frac{1}{2\sqrt{N}}\big\|D^{2}f(x^{*})\big\|\Bigg].
		\end{align*}
		Hence we get
		\begin{equation*}
			\bigg|\widetilde{f}(\widetilde{x}^{*}(N),N)-f(x^{*}(N),N)+\frac{1}{2N}\xi^{T}D^{2}f(x^{*})^{-1}\xi\bigg|\leq\frac{K_{21}}{N^{3/2}}+\frac{K_{22}\epsilon(N)}{\sqrt{N}},
		\end{equation*}
		where $K_{21},K_{22}$ are some positive constants. This completes the proof of (\ref{probability_tilda_estimates_functions}).\\
		For the proof of (\ref{probability_tilda_estimates_determinants}) we apply Taylor's approximation to $D^{2}\widetilde{f}(\widetilde{x}^{*}(N),N)$ at $x^{*}$ and use (\ref{probability_estimate_maximums})
		\begin{equation*}
			\big\|D^{2}\widetilde{f}(\widetilde{x}^{*}(N),N)-D^{2}f(x^{*},N)\big\|\leq \frac{F^{(3)}}{\sqrt{N}}\bigg\|D^{2}f(x^{*})^{-1}\xi+ \frac{O(1)}{\sqrt{N}}\bigg\|.
		\end{equation*} 
		Then, by (\ref{probability_equation_for_f}) we have
		\begin{equation*}
			D^{2}\widetilde{f}(\widetilde{x}^{*}(N),N)=D^{2}f(x^{*})+\frac{O(1)}{\sqrt{N}}.
		\end{equation*} 
		Next, we divide (\ref{probability_tilda_estimates_proof_f_second_derivative_estimate}) by the above estimate and obtain
		 \begin{equation*}
			\label{sum_tilda_estimates_proof_determinants_determinants_approximation}
			\frac{D^{2}f(x^{*}(N),N)}{D^{2}\widetilde{f}(\widetilde{x}^{*}(N),N)}=I+\frac{O(1)}{\sqrt{N}}.
		\end{equation*}
		Now, let us consider Taylor's expansion of $f(x)=\sqrt{x}$ at $1$
		\begin{equation*}
			\sqrt{x}=1+\frac{1}{2\sqrt{x_{\theta}}}(x-1),
		\end{equation*}
		where $x_{\theta}\in(\min(x,1),\max(x,1))$. We apply it for 
		\begin{equation*}
			x=\bigg|\det\bigg(\frac{ D^{2}f(x^{*}(N),N)}{D^{2}\widetilde{f}(\widetilde{x}^{*}(N),N)}\bigg)\bigg|,
		\end{equation*} 
		which yields
		\begin{equation*}
			\label{estimates_tilda_estimates_proof_determinants_inequality_LHS_sqare_root_taylor}
			\Bigg|\frac{\sqrt{|\det D^{2}f(x^{*}(N),N)|}}{\sqrt{|\det D^{2}\widetilde{f}(\widetilde{x}^{*}(N),N)|}}-1\Bigg|=\frac{1}{2\sqrt{x_{\theta}}}\Bigg|\bigg|\det\bigg(\frac{ D^{2}f(x^{*}(N),N)}{D^{2}\widetilde{f}(\widetilde{x}^{*}(N),N)}\bigg)\bigg|-1\Bigg|.
		\end{equation*}
		Since we have defined constants $F^{(2)}$, $F'^{(2)}_{det}$, determinants are bounded, therefore $x_{\theta}$ is also bounded.\\
		Then we can use formula $\det(I+\epsilon A)=I+O(\epsilon),\ \epsilon\to 0$ to get
		\begin{equation*}
			\Bigg|\det\bigg(\frac{D^{2}f(x^{*}(N),N)}{D^{2}\widetilde{f}(\widetilde{x}^{*}(N),N)}\bigg)-1\Bigg|=\frac{1}{2\sqrt{x_{\theta}}}\Bigg|\det\bigg(I+\frac{O(1)}{\sqrt{N}}\bigg)-1\Bigg|\leq\frac{K_{3}}{\sqrt{N}},
		\end{equation*}
		where $K_{3}>0$. Combining above results finalizes the proof of (\ref{probability_tilda_estimates_determinants})
	\end{proof}

	Now, with use of above result we can prove following limit theorems
 
	\begin{theorem}[Central limit theorem I]
		For $X(N)$ with distribution (\ref{probability_distribution_interior}) the random vector $Z(N)=\sqrt{N}(x^{*}-X(N))$ converges weakly to $\mathcal{N}(0,D^{2}f(x^{*})^{-1})$ and following estimate of the mgf holds
		\begin{equation*}
			M_{Z(N)}(\xi)=\exp\bigg(\frac{1}{2}\xi^{T}D^{2}f(x^{*})^{-1}\xi\bigg)\bigg(1+\frac{O(1)}{\sqrt{N}}+\epsilon(N)\sqrt{N}O(1)\bigg),\ N\to\infty.
		\end{equation*}
	\end{theorem}

	\begin{theorem}[Central limit theorem II]
		For $X(N)$ with distribution (\ref{probability_distribution_boundary}) the random vector $Z(N)=\big(N(x_{1}^{*}-X_{1}(N)),\sqrt{N}(y^{*}-Y(N))\big)$ converges weakly to $Exp\big|\frac{\partial f(x^{*})}{\partial x_{1}}\big|$ for $Z_{1}(N)$ and to $\mathcal{N}(0,D_{y}^{2}f(x^{*})^{-1})$ for $\big(Z_{2}(N),\ldots,Z_{m}(N)\big)$. Furthermore, following estimate of the mgf holds
		\begin{equation*}
			M_{Z(N)}(\xi)=\frac{\big|\frac{\partial f(x^{*})}{\partial x_{1}}\big|}{\big|\frac{\partial f(x^{*})}{\partial x_{1}}-\xi_{1}\big|}e^{\frac{1}{2}\hat{\xi}^{T}D_{y}^{2}f(x^{*})^{-1}\hat{\xi}}\bigg(1+\frac{O(1)}{\sqrt{N}}+\epsilon(N)\sqrt{N}O(1)\bigg),\ N\to\infty,
		\end{equation*}
		where $\hat{\xi}=(\xi_{2},\ldots,\xi_{m})$, $X(N)=\big(X_{1}(N),Y(N)\big)$ and $x^{*}=(x_{1}^{*},y^{*})$.
	\end{theorem}

	\begin{proof}[Proof of Theorem 5]
		Let us define  $\widetilde{f}(x,N):=f(x,N)+\frac{1}{N}\xi^{T}(x^{*}-x)$, then the mgf of $Z(N)$ can be expressed
		\begin{equation*}
			\label{fluctuation_proof_mgf}
			M_{Z(N)}(\xi)=\frac{\int_{\Omega}e^{N\tilde{f}(x,N)}dx}{\int_{\Omega}e^{Nf(x,N)}dx}.
		\end{equation*}
		First, we approximate the numerator and denominator of the mgf using Theorem 2
		\begin{equation*}
			M_{Z(N)}(\xi)=\exp\big(N\widetilde{f}(\widetilde{x}^{*}(N),N)-Nf(x^{*}(N),N)\big)\frac{\sqrt{|\det D^{2}f(x^{*}(N),N)|}}{\sqrt{|\det D^{2}\widetilde{f}(\widetilde{x}^{*}(N),N)|}}\bigg(1+\frac{O(1)}{\sqrt{N}}\bigg).
		\end{equation*}
		Next, we insert the estimates (\ref{probability_tilda_estimates_functions}) and (\ref{probability_tilda_estimates_determinants}) from Proposition 1 
		\begin{equation*}
			M_{Z(N)}(\xi)=\exp\bigg(\frac{1}{2}\xi^{T}D^{2}f(x^{*})^{-1}\xi+N^{-1/2}O(1)+\epsilon(N)\sqrt{N}O(1)\bigg)\bigg(1+\frac{O(1)}{\sqrt{N}}\bigg).
		\end{equation*}
		Then use estimate
		\begin{equation}
			\exp\big(N^{-1/2}O(1)+\epsilon(N)\sqrt{N}O(1)\big)=1+N^{-1/2}O(1)+\epsilon(N)\sqrt{N}O(1),
		\end{equation}
		which leads to the final result.
	\end{proof}

	\begin{proof}[Proof of Theorem 6]
		This proof is analogical. Here we define $\widetilde{f}(x,N):=f(x,N)+\frac{1}{\sqrt{N}}\xi^{T}(\sqrt{N}(x^{*}_{1}-x_{1}),x_{2}^{*}-x_{2},\ldots,x^{*}_{m}-x_{m})$. Then approximate the numerator and denominator of the mgf using Theorem 3 
		\begin{align*}
			M_{Z(N)}(\xi)&=\exp\big(N\widetilde{f}(\widetilde{x}^{*}(N),N)-Nf(x^{*}(N),N)\big)\frac{\big|\frac{\partial f(x^{*}(N),N)}{\partial x_{1}}\big|}{\big|\frac{\partial\widetilde{f}(\widetilde{x}^{*}(N),N)}{\partial x_{1}}\big|}\frac{\sqrt{|\det D_{y}^{2}f(x^{*}(N),N)|}}{\sqrt{|\det D_{y}^{2}\widetilde{f}(\widetilde{x}^{*}(N),N)|}}\times\\
			&\times\bigg(1+\frac{O(1)}{\sqrt{N}}\bigg).
		\end{align*}
		Since the first coordinate of $x^{*}(N)$ is fixed we can apply results of Proposition 1 to obtain
		\begin{equation*}
			\label{fluctuations_proof_mgf_estimates_middle_step}
			M_{Z(N)}(\xi)=\frac{\big|\frac{\partial f(x^{*}(N),N)}{\partial x_{1}}\big|}{\big|\frac{\partial f(\widetilde{x}^{*}(N),N)}{\partial x_{1}}-\xi_{1}\big|}\exp\bigg(\frac{1}{2}\hat{\xi}^{T}D^{2}f(x^{*})^{-1}\hat{\xi}\bigg)\bigg(1+\frac{O(1)}{\sqrt{N}}+\epsilon(N)\sqrt{N}O(1)\bigg),
		\end{equation*}
		where $\hat{\xi}=(\xi_{2},\ldots,\xi_{m})$.\\
		Next, we use first order Taylor's Theorem to obtain
		\begin{equation*}
			\bigg|\frac{\partial f(\widetilde{x}^{*}(N),N)}{\partial x_{1}}-\frac{\partial f(x^{*},N)}{\partial x_{1}}\bigg|\leq F^{(2)}\big(|\widetilde{x}^{*}(N)-x^{*}(N)|+|x^{*}(N)-x^{*}|\big),
		\end{equation*}
		and with use of  (\ref{probability_equation_for_f}), (\ref{probability_estimate_maximums}) and (\ref{probability_tilda_estimates_maximums}) get
		\begin{equation*}
			\label{probability_fluctuation_proof_b_case_first_derivative_estimate}
			\frac{\partial f(\widetilde{x}^{*}(N),N)}{\partial x_{1}}=\frac{\partial f(x^{*})}{\partial x_{1}}+\frac{O(1)}{\sqrt{N}}.
		\end{equation*}
		Analogical estimation procedure is for other derivative
		\begin{equation*}
			\label{probability_fluctuation_proof_b_case_first_derivative_estimate}
			\frac{\partial f(x^{*}(N),N)}{\partial x_{1}}=\frac{\partial f(x^{*})}{\partial x_{1}}+O(1)\epsilon(N).
		\end{equation*}
		Using above estimates we obtain
		\begin{equation*}
			\frac{\big|\frac{\partial f(x^{*}(N),N)}{\partial x_{1}}\big|}{\big|\frac{\partial f(\widetilde{x}^{*}(N),N)}{\partial x_{1}}-\xi_{1}\big|}=\frac{\big|\frac{\partial f(x^{*})}{\partial x_{1}}\big|}{\big|\frac{\partial f(x^{*})}{\partial x_{1}}+O(1)/\sqrt{N}-\xi_{1}\big|}(1+\epsilon(N)O(1))=\frac{\big|\frac{\partial f(x^{*})}{\partial x_{1}}\big|}{\big|\frac{\partial f(x^{*})}{\partial x_{1}}-\xi_{1}\big|}\bigg(1+\frac{O(1)}{\sqrt{N}}\bigg),
		\end{equation*}
		and substituting that into last estimate of mgf yields the final result.
	\end{proof}
	
\begin{acknowledgments}
	Author dedicates special thanks to Professor Vassili Kolokoltsov from the University of Warwick for
	\begin{itemize}
		\item extending result of \cite{Laplace_method_approach_kolokoltsov} to a draft for this article, however, excluding the proof for the main results, that is, the remainder estimates and fluctuation theorem for the boundary case,
		\item hint in the proof of Theorem 3 that the curvature of $y^{*}(x_{1},N)$ does not influence the solution,
		\item English language check and style suggestions, 
		\item comments on the applications of the paper results.
	\end{itemize}
\end{acknowledgments}


\bibliographystyle{apa}
\bibliography{biblio}
	
\end{document}